\documentclass[11pt,a4paper]{amsart}
\usepackage{graphicx,multirow,array,amsmath,amssymb}
\usepackage{xcolor}
\usepackage{longtable}
\usepackage{supertabular}

\newtheorem{theorem}{Theorem}

\newtheorem{lemma}[theorem]{Lemma}
\newtheorem{corollary}[theorem]{Corollary}
\theoremstyle{definition}
\newtheorem*{definition}{Definition}
\theoremstyle{remark}

\begin{document}
	
\title[New Upper Bounds on the Ribbonlength of Alternating Links]{New Upper Bounds on the Ribbonlength of Alternating Links with Bipartite Dual Graphs}

\author[H. Yoo]{Hyungkee Yoo}
\address{Department of Mathematics Education, Sunchon National University, Suncheon 57922, Republic of Korea}
\email{hyungkee@scnu.ac.kr}

\keywords{ribbonlength, three-page presentation, alternating links, dual graphs}
\subjclass[2020]{57K10}

\begin{abstract}
The ribbonlength of a link is a geometric invariant defined as the infimum of the ratio of the length to the width of a folded ribbon realization of the link.
In this paper, we prove that if an alternating link admits an alternating diagram with a bipartite dual graph, then its ribbonlength satisfies
$$
\mathrm{Rib}(L) \le \sqrt{3} \, c(L).
$$
Using this result, we present improved upper bounds on the ribbonlength for several knots and links with small crossing numbers, and determines the exact ribbonlength of the Hopf link to be $2\sqrt{3}$.
\end{abstract}
	
\maketitle

\section{Introduction} \label{sec:intro}

A \textit{link} is a one-dimensional closed manifold embedded in three-dimensional space $\mathbb{R}^3$.
A link with a single connected component is called a \textit{knot}.
To measure the geometric complexity of knots and links, the minimal length required to realize a link as a rope of unit thickness, called the \textit{ropelength}, has been extensively studied.
In 2005, Kauffman\cite{K} introduced the \textit{ribbonlength}, the two-dimensional version of the ropelength.
A \textit{folded ribbon link}  is obtained by folding a narrow rectangular strip into a planar shape that traces a polygonal link with finitely many straight segments, as illustrated in Figure~\ref{fig:folded}.
In 2015, Denne et al~\cite{DKTZ} gave a rigorous definition of folded ribbon knots as follows.

\begin{definition}
Let $K$ be the knot. 
Then $K_w$ is a folded ribbon knot of $K$  with width $w$ that satisfies following two conditions:
    \begin{enumerate}
        \item The ribbon is flat and its fold lines are disjoint.
        \item The core of $K_w$ is a union of a finite number of consecutively straight lines with crossing information which represents the knot type of $K$.
    \end{enumerate}
\end{definition}

\begin{figure}[h!]
	\includegraphics{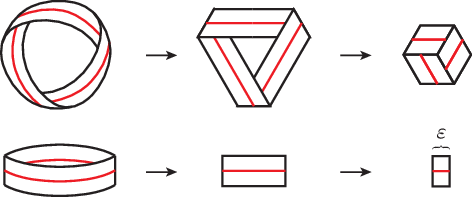}
	\caption{Two folded ribbon knots of the trivial knot}
	\label{fig:folded}
\end{figure}

Note that, for the trivial knot, a folded knot can be constructed by placing two congruent rectangles of width 1 and length $\varepsilon$, for an arbitrary $\varepsilon>0$, on top of each other.
Gluing together their sides of length 1 yields the desired folded knot, as drawn in Figure~\ref{fig:folded}.
In this construction, the ribbonlength can be made arbitrarily small.
For this reason, we define the folded ribbonlength as an infimum.

\begin{definition}
Let $K$ be a knot (or link) and $w$ be a positive real number.
Then
$$Rib(K)=\inf_{K' \in [K]_w}\frac{\emph{\text{Len}}(K')}{w}$$
is called a folded ribbonlength of $K$.
Here, $[K]_w$ denotes the set of folded ribbon knots of width $w$
representing the knot type $K$, $\emph{\text{Len}}(K')$ is the length of the core of $K'$,
and we do not distinguish whether the ribbon is one-sided or two-sided.
\end{definition}

A \textit{link diagram} is a regular projection of a link onto the plane $\mathbb{R}^2$, where each crossing is equipped with over/under-crossing data.
The \textit{crossing number} $c(L)$ is defined as the minimal number of crossings among all link diagrams representing $L$.
Kusner conjectured in 2005 that the ribbonlength admits a linear upper bound in terms of the crossing number.
For many classes of knots and links, Kusner's conjecture has been verified~\cite{DHLM, KMRT, KNY1, KNY2}.
Tian~\cite{T} later established a quadratic upper bound, which was further improved by Denne~\cite{D} in 2020.
Recently, Kim, No, and Yoo~\cite{KNY3} proved that
$$\mathrm{Rib}(L)\le \frac{5}{2} \, c(L)+1,$$
thereby confirming Kusner’s conjecture~\cite{D}.
Meanwhile, Diao and Kusner conjectured that the lower bound of the ribbonlength in terms of the crossing number is sublinear.
More recently, Denne and Patterson~\cite{DP} proved that this lower bound is, in fact, a constant.

The purpose of this paper is to further improve this upper bound for certain special classes of links satisfying additional combinatorial conditions.
Our approach combines geometric constructions using equilateral triangles with diagrammatic techniques involving checkerboard dual graphs associated to link diagrams.
In particular, we focus on alternating links that admit minimal crossing diagrams whose dual graphs are bipartite.

Our main result is the following.

\begin{theorem} \label{thm:main}
If an alternating link $L$ has an alternating diagram with a bipartite dual graph, then
$$\mathrm{Rib}(L) \leq \sqrt{3} c(L).$$
\end{theorem}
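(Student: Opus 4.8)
The plan is to prove Theorem~\ref{thm:main} by an explicit construction: starting from the given alternating diagram $D$ of $L$ with $c = c(L)$ crossings and bipartite dual graph, I would build a concrete folded ribbon representative whose core is a polygonal curve assembled out of sides of equilateral triangles, and then bound its length by $\sqrt 3\, c(L)$ times its width. The constant $\sqrt 3$ is meant to arise as the length spent per crossing, and it reflects the angles of equilateral triangles, where the relevant trigonometric quantities are $\sin 60^\circ = \tfrac{\sqrt 3}{2}$ and $\tan 60^\circ = \sqrt 3$.

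The first step is to put the combinatorial data of $D$ into a convenient normal form using the hypotheses. Checkerboard-color the regions of $D$; by assumption the associated dual graph --- with a vertex for each region of one color and an edge for each crossing --- is bipartite, so that color class of regions admits a further $2$-coloring in which the two regions incident to any crossing receive opposite labels. I would use this $2$-coloring to lay $D$ out on a triangular grid (equivalently, via a three-page presentation of $L$ whose three pages are opened out into three $120^\circ$ sectors): the two label classes are matched to the ``up'' and ``down'' triangles of the grid, each crossing is placed at a grid point where two unit segments meet transversally at $60^\circ$, and the strands of $D$ run along grid edges, turning only by multiples of $60^\circ$. Because $D$ is alternating, the over/under information along each strand is forced and therefore automatically globally consistent, which is what allows the ribbons of the two strands at a crossing to be stacked unambiguously.

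The second step is to thicken this grid realization of $D$ into a folded ribbon of width $w$: along a straight run the ribbon is a flat band, at each $60^\circ$ or $120^\circ$ turn it folds across the bisecting line of the turn, and at each crossing the over-strand's band lies flat on top of the under-strand's band (the latter being allowed to fold underneath, hidden by the covering band). The geometric content is a local estimate: for a turn of an equilateral triangle the relevant fold lengths are controlled by $\sin 60^\circ$, so calibrating the grid spacing against $w$ keeps the ribbon flat with pairwise disjoint fold lines, i.e.\ produces a genuine folded ribbon link of type $L$. A count then finishes the argument: $D$ is $4$-valent, so it has $2c$ edges, and --- with the edge length calibrated against $w$ and the few extra turns needed to close up each component absorbed --- the total core length is at most $\sqrt 3\, c\, w$; dividing by $w$ gives $\mathrm{Rib}(L) \le \sqrt 3\, c(L)$. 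For the standard two-crossing diagram of the Hopf link, whose dual graph is the bipartite $2$-cycle, this specializes to $\mathrm{Rib}(\mathrm{Hopf}) \le 2\sqrt 3$.

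The step I expect to be the main obstacle is making the second step work globally: checking that the local triangular gadgets at crossings and turns assemble into a single folded ribbon whose fold lines are pairwise disjoint and whose bands overlap only in the prescribed crossing stacks, and simultaneously that the calibration of grid spacing against $w$ yields the constant exactly $\sqrt 3$ rather than something slightly larger. The bipartite hypothesis is exactly the combinatorial input that makes this possible: it is what forces the up/down placement of the triangular regions to be consistent all the way around $D$, so neighboring gadgets interlock with matching orientations instead of colliding. Verifying that the worst local configuration is a $120^\circ$ turn around an equilateral triangle, and that it contributes the claimed share of length, is the crux of the estimate.
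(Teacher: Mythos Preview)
Your outline has the right raw materials (equilateral triangles, a three-page viewpoint, the bipartite dual as the source of global consistency), but there are two genuine gaps relative to what the theorem requires.

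\textbf{Minimality of the diagram.} You begin ``starting from the given alternating diagram $D$ of $L$ with $c=c(L)$ crossings''. The hypothesis only gives \emph{some} alternating diagram with bipartite dual; it need not be reduced, so it need not have $c(L)$ crossings. The paper handles this with a short but necessary lemma: a nugatory crossing corresponds to a cut-edge of $\Gamma(D)$ (it cannot be a loop, since $\Gamma(D)$ is bipartite), and contracting a cut-edge preserves bipartiteness. Iterating gives a reduced alternating diagram with bipartite dual, and only then does Kauffman--Murasugi--Thistlethwaite let you set $n=c(L)$.

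\textbf{The folded ribbon is a stack, not a planar grid.} The construction in the paper is not a layout of $D$ on a planar triangular lattice with crossings at $60^\circ$. It is the Hopf-link picture writ large: a \emph{vertical stack} of $3c(L)$ congruent equilateral triangles sharing a common centroid axis, with the ribbon passing from one triangle to the next by a single fold. Combinatorially this is encoded by a \emph{rotated} circular three-page presentation: one constructs a binding circle by enclosing each crossing and merging the resulting disks through the regions of one bipartition class of $\Gamma(D)$, then connecting via a spanning tree. The binding circle cuts $D$ into exactly $3c(L)$ arcs --- $c(L)$ over-strands, $c(L)$ under-strands, and $c(L)$ outside arcs --- and the alternating hypothesis forces them to occur in the cyclic order outside/over/under along every component. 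Each arc becomes one stacked triangle of core length $\tfrac{1}{\sqrt 3}$, giving $3c(L)\cdot\tfrac{1}{\sqrt 3}=\sqrt 3\,c(L)$ on the nose.

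This stacked model is precisely what dissolves the obstacle you flag: because all the triangles sit on top of one another, there is no planar interference to control, the fold lines are the triangle sides and are automatically disjoint, and the constant $\sqrt 3$ is exact rather than an estimate. By contrast, in a planar triangular-grid layout with $2c$ edges you would need each edge to carry only $\tfrac{\sqrt 3}{2}w$ of core length, and with unit-width ribbons crossing at $60^\circ$ that is too short to keep the ribbons and fold lines from colliding away from the intended crossings. So the step you correctly identified as the crux is not merely hard in your framework --- the paper avoids it by changing the geometric model.
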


This result strengthens the general linear bound of Kim, No, and Yoo for this class of links.
It yields improved upper bounds for knots and links with small crossing numbers whose ribbonlengths were previously studied, and determines the exact ribbonlength of the Hopf link to be $2\sqrt{3}$.
In addition, we show that this upper bound is preserved under connected sum operations, providing the first explicit estimate for the ribbonlength of connected sums under the above assumptions.

This paper is organized as follows.
In Section~\ref{sec:example}, we observe folded ribbon realizations of the Hopf link, which serve as a motivating example.
Section~\ref{sec:circular} introduces circular three-page presentations, a key tool in our constructions.
In Section~\ref{sec:alt}, we deal with alternating diagrams with bipartite dual graphs.
Finally, in Section~\ref{sec:proof}, we prove of Theorem~\ref{thm:main}.
Using this result, we present improved upper bounds on the ribbonlength for several knots and links and compare them with previously known estimates.

\section{Example: the Hopf link} \label{sec:example}

In this section, we investigate how the Hopf link can be realized as a folded ribbon link.
The Hopf link is the simplest nontrivial link consisting of two components that are linked exactly once.
In 2020, Denne~\cite{D} proposed a folded ribbon construction for the Hopf link, as illustrated in Figure~\ref{fig:Hopf1}.
In this realization, the length of each component is exactly twice its width.

\begin{figure}[h!]
	\includegraphics{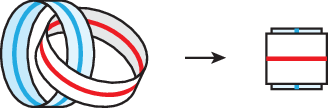}
	\caption{A folded ribbon link of the Hopf link}
	\label{fig:Hopf1}
\end{figure}

We now present a shorter folded ribbon construction of the Hopf link.
Consider two rectangles of unit width and length $\sqrt{3}$.
These rectangles are placed in a staggered configuration so that their diagonals intersect, as shown in Figure~\ref{fig:Hopf2}.
By sequentially folding the excess parts inward, we obtain a folded ribbon link whose shape consists of stacked equilateral triangles.
Note that this construction consists of six equilateral triangles, each contributing a ribbonlength of $\frac{1}{\sqrt{3}}$.
Hence, the total ribbonlength is $2\sqrt{3}$, which is shorter than the value $4$ obtained in Denne’s original construction.

\begin{figure}[h!]
	\includegraphics{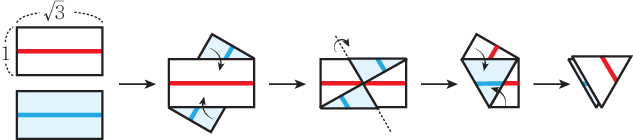}
	\caption{A new folded ribbon Hopf link}
	\label{fig:Hopf2}
\end{figure}

An exaggerated depiction of the folding process is shown in Figure~\ref{fig:Hopf3}.

\begin{figure}[h!]
	\includegraphics{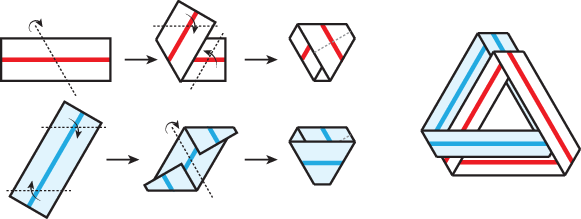}
	\caption{Construction process for the folded ribbon Hopf link}
	\label{fig:Hopf3}
\end{figure}

It is noteworthy that, under this construction, each component of the folded ribbon link realizes a M\"{o}bius band structure.
Recently, Schwartz~\cite{Sch} proved that a smooth embedded paper M\"{o}bius band must have an aspect ratio greater than $\sqrt{3}$.
Using this fact, we obtain the following theorem.

\begin{theorem}
The ribbonlength of the Hopf link is $2\sqrt{3}$.
\end{theorem}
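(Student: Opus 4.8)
The plan is to establish the two inequalities $\mathrm{Rib}(L) \le 2\sqrt{3}$ and $\mathrm{Rib}(L) \ge 2\sqrt{3}$ separately, where $L$ denotes the Hopf link. The upper bound is already in hand: the explicit folded ribbon construction of Figure~\ref{fig:Hopf2}, built from six equilateral triangles each contributing ribbonlength $\tfrac{1}{\sqrt{3}}$, realizes the Hopf link with total ribbonlength $2\sqrt{3}$, so by the definition of $\mathrm{Rib}$ as an infimum we immediately get $\mathrm{Rib}(L) \le 2\sqrt{3}$. (Alternatively, this also follows from Theorem~\ref{thm:main}, since the Hopf link is alternating with $c(L)=2$ and its standard two-crossing diagram has a bipartite dual graph.) So the real content is the matching lower bound.

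For the lower bound, I would argue as follows. Take any folded ribbon realization $L'$ of the Hopf link with width $w$; we must show $\mathrm{Len}(L')/w \ge 2\sqrt{3}$. The key structural observation, noted in the paragraph preceding the theorem, is that each of the two components of the folded ribbon Hopf link carries the structure of a paper M\"obius band: the ribbon, being a flat folded strip whose core closes up into a knotted (here, singly linked) loop, when considered as an abstract immersed piece of paper has one boundary circle and a half-twist, hence is a M\"obius band. I would make this precise by explaining why a folded ribbon component of the Hopf link cannot be an annulus — the linking with the other component forces an odd number of "flat folds" (or more precisely forces the bundle to be non-orientable), so each component is topologically a M\"obius strip that is smoothly (indeed piecewise-linearly, but one can smooth the corners) embedded in $\mathbb{R}^3$. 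The relevant numerical invariant is the aspect ratio of this M\"obius band, namely the ratio of its length to its width $w$.

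Now I invoke Schwartz's theorem~\cite{Sch}: a smoothly embedded paper M\"obius band must have aspect ratio strictly greater than $\sqrt{3}$. Applying this to each of the two components, each contributes length strictly greater than $\sqrt{3}\,w$, so $\mathrm{Len}(L') > 2\sqrt{3}\,w$, giving $\mathrm{Len}(L')/w > 2\sqrt{3}$ for every realization. Taking the infimum over all folded ribbon realizations yields $\mathrm{Rib}(L) \ge 2\sqrt{3}$. Combined with the upper bound this forces $\mathrm{Rib}(L) = 2\sqrt{3}$; note that the infimum is not attained, consistent with Schwartz's bound being strict.

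I expect the main obstacle to be the rigorous justification that each component of an arbitrary folded ribbon Hopf link is genuinely a smoothly embedded M\"obius band to which Schwartz's hypothesis applies — one must check that the folded ribbon (a priori only piecewise-linear, with fold lines) can be replaced by a smooth embedded paper surface of the same or smaller aspect ratio without changing the link type, and that the non-orientability genuinely follows from the Hopf linking rather than being an artifact of the particular picture. A secondary point to handle carefully is that Schwartz's result is stated for a single M\"obius band, so one needs that the two ribbon components, while embedded disjointly in $\mathbb{R}^3$, may each be analyzed independently for the aspect-ratio bound; this is immediate since the aspect ratio of one component does not see the other. Once these topological reductions are in place, the inequality is just two applications of the cited theorem.
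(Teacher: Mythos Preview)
Your outline coincides with the paper's: the upper bound comes from the six-triangle construction, and the lower bound is meant to come from Schwartz's theorem applied to each component. The paper offers no more detail than this; it observes that in the particular triangle construction each component is a M\"obius band, cites Schwartz, and states the theorem.

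The obstacle you single out is genuine, but the resolution you propose does not work. Hopf linking does \emph{not} force the ribbon components to be non-orientable: two embedded annuli in $\mathbb{R}^3$ can perfectly well form a Hopf link, and in the folded-ribbon setting a component whose core polygon has an even number of vertices is two-sided. There is no topological obstruction to realizing the Hopf link this way, so Schwartz's M\"obius bound simply fails to apply to such realizations, and the argument as written does not establish $\mathrm{Rib}(L)\ge 2\sqrt{3}$ over \emph{all} folded ribbon Hopf links. You were right to isolate this step as the crux; the point is that it cannot be filled in along the lines you sketch, and the paper itself is silent on the annular case as well.
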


\section{Rotated circular three-page presentations} \label{sec:circular}

In 1995, Cromwell~\cite{Cr} showed that any link can be embedded in a finite collection of half-planes sharing a common axis, known as the \textit{binding axis}, within an open-book decomposition of $\mathbb{R}^3$.
These half-planes are referred to as \textit{pages}.
When each page contains exactly one arc of the link, the embedding is called an \textit{arc presentation}.
Such presentations are particularly useful for investigating link invariants, as they offer a combinatorial method for analyzing the structure of knots and links.
If we modify this setup to allow multiple disjoint arcs per page while restricting the total number of pages to three, the result is called a \textit{three-page presentation}.

\begin{figure}[h!]
	\includegraphics{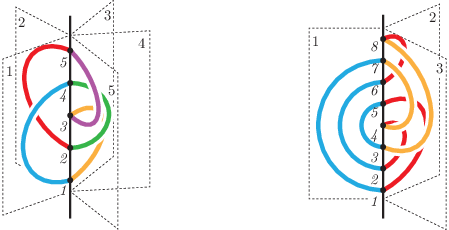}
	\caption{An arc presentation and a three-page presentation of the trefoil knot}
	\label{fig:arc}
\end{figure}

In 1999, Dynnikov~\cite{Dyn} showed that every link can be represented by a three-page presentation.
This presentation has been widely studied not only for knots and links, but also for more general objects such as 3-valent spatial graphs and singular knots~\cite{Dyn, Dyn2, Dyn3, JLY, Kur, Kur2, KV, Y}.
In both arc and three-page presentations, each arc is combinatorially described by a binding index and a page number.
The binding axis intersects the link at finitely many points, called binding points, which are typically indexed sequentially from bottom to top as $1, 2, \dots, n$ and referred to as \textit{binding indices}.
Each arc is also labeled with a \textit{page number}, indicating the page to which it belongs in the open-book decomposition.

Consider the one-point compactification of $\mathbb{R}^3$ equipped with an open-book structure.
Then the binding axis transforms into an unknotted circle, and each half-plane becomes a disk.
This circle is called the \textit{binding circle}.
In this context, the notion of a \textit{circular three-page presentation} was introduced in~\cite{Y}.
The binding index, originally represented by vertical height, is now assigned sequentially along the binding circle.

A circular three-page presentation can be represented as a link diagram, which provides a convenient planar description of the presentation.
First, we choose a simple closed curve as a binding circle.
If the first and second pages are placed inside the binding circle and the third page is placed outside, then all crossings can be arranged to lie within the interior of the binding circle.
In this configuration, the under-strands are positioned on the first page, while the over-strands lie on the second page, as illustrated in Figure~\ref{fig:three}.

\begin{figure}[h!]
	\includegraphics{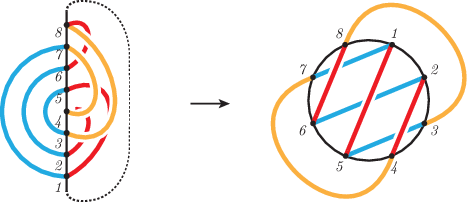}
	\caption{A three-page presentation and its circular three-page presentation}
	\label{fig:three}
\end{figure}

To formalize this planar description, we introduce the following definition.

\begin{definition}
Let $D$ be a link diagram.
A simple closed curve $\gamma$ is called a \textit{binding circle for a three-page presentation} of $D$ if it satisfies the following conditions:
\begin{enumerate}
    \item The curve $\gamma$ intersects $D$ in finitely many points.
    \item All crossings of $D$ lie inside $\gamma$.
    \item Each arc of $D$ cut by $\gamma$ is of one of the following three types:
    \begin{enumerate}
        \item it lies outside $\gamma$;
        \item it lies inside $\gamma$ and participates only in over-crossings;
        \item it lies inside $\gamma$ and participates only in under-crossings.
    \end{enumerate}
    \item No two arcs of the same type are adjacent along $\gamma$.
\end{enumerate}
\end{definition}

Given an arbitrary link diagram, the existence of such a binding circle can be verified by a straightforward argument.
First, we ignore the crossing information of the diagram and regard it as a $4$-valent planar graph.
If this graph is connected, we choose a spanning tree and consider the boundary of a sufficiently small $\varepsilon$-neighborhood of the tree in $\mathbb{R}^2$.
It is then easy to check that this boundary curve satisfies the conditions in the definition of a binding circle for a three-page presentation.
If the underlying graph is not connected, one may construct a binding circle for each connected component and then join them together by taking their connected sum, which again yields a curve satisfying the required properties.
Further details will be presented in a subsequent work.

\begin{lemma}\label{lem:binding}
Let $D$ be a link diagram.
If there exists a binding circle for a three-page presentation of $D$ with $n$ binding points, then $D$ admits a three-page presentation consisting of $n$ arcs.
\end{lemma}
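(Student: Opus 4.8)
The plan is to use $\gamma$ itself as the binding circle and to read the page assignment directly off the three types in condition~(3). First I would pass to the one-point compactification, so that the plane carrying $D$ becomes a sphere $\Pi\cong S^2$ inside $S^3$ separating $S^3$ into two balls $B_+$ and $B_-$. The simple closed curve $\gamma\subset\Pi$ bounds two disks $\Delta_{\mathrm{in}},\Delta_{\mathrm{out}}\subset\Pi$, and being a planar Jordan curve it is unknotted. Take $P_3:=\Delta_{\mathrm{out}}$, let $P_2$ be the disk obtained by pushing $\mathrm{int}\,\Delta_{\mathrm{in}}$ slightly into $B_+$ rel $\gamma$, and let $P_1$ be the disk obtained by pushing $\mathrm{int}\,\Delta_{\mathrm{in}}$ slightly into $B_-$ rel $\gamma$; these three disks are mutually disjoint away from $\gamma$, and since $\gamma$ is unknotted they complete to the pages of an open-book decomposition of $S^3$ with binding $\gamma$. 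Taking the $n$ points of $\gamma\cap D$ as binding points — we may assume the intersection is transverse, so that $D\setminus\gamma$ is exactly a union of $n$ arcs — I would place each of these arcs on one of the three pages according to its type: type~(a) arcs stay on $P_3$, type~(b) arcs are pushed up onto $P_2$, and type~(c) arcs are pushed down onto $P_1$.

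The crucial step, and the only one with genuine content, is to observe that arcs of the same type are pairwise disjoint. By conditions~(1)--(2) every crossing of $D$ lies strictly inside $\gamma$, so type~(a) arcs carry no crossings and are disjoint; if two type~(b) arcs met, their meeting point would be a crossing at which both arcs are over-strands, which is impossible, and the same reasoning rules out self-intersections of a type~(b) arc and all intersections among type~(c) arcs. Hence every crossing of $D$ is formed by exactly one type~(b) arc lying over one type~(c) arc. This disjointness is precisely what makes the simultaneous push of the type~(b) arcs onto $P_2$ and of the type~(c) arcs onto $P_1$ an embedding, and it is what guarantees that after the push the over/under data at every crossing — over-strand in $B_+$, under-strand in $B_-$ — reproduces that of $D$. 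At each binding point the two incident arcs are one type~(a) arc (on $P_3$) and one inside arc (on $P_1$ or $P_2$), so the link passes through the binding between two distinct pages, as required.

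Assembling these pieces, the spatial link obtained is exactly the one produced by the standard recipe ``over-strands just above the plane, under-strands just below'' applied to the diagram $D$, hence it is isotopic to $L$; it lies in three pages of an open-book decomposition and uses exactly the $n$ arcs of $D\setminus\gamma$. Deleting from $S^3$ a point of $\gamma$ other than a binding point turns this into the classical picture with half-plane pages and a linear binding axis, so in either formulation $L$ admits a three-page presentation with $n$ arcs. Conditions~(1)--(3) carry the whole argument; condition~(4) is needed only as a mild non-degeneracy hypothesis.

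I expect the main obstacle to be expository rather than conceptual: turning the informal phrase ``push the arcs onto the pages'' into an honest ambient isotopy, and checking that the three pushed-in disks genuinely complete to the pages of an open book for the unknot $\gamma$. Both are routine once the same-type disjointness above is in hand. Two minor points remain: a component of $D$ disjoint from $\gamma$, which is handled by enlarging $\gamma$ slightly so as to meet it (changing $n$ in a controlled way); and the transversality assumption on $\gamma\cap D$ used in counting the arcs, which may be arranged by a small perturbation.
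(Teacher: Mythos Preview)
The paper does not supply a proof of this lemma at all: it is stated immediately after the definition of a binding circle and the informal description of how a circular three-page presentation yields such a diagram (under-strands on the first page, over-strands on the second, outer arcs on the third), and the reader is evidently meant to see that the construction reverses. Your proposal carries out exactly this reversal---compactify, take $P_3=\Delta_{\mathrm{out}}$ and $P_1,P_2$ the two push-offs of $\Delta_{\mathrm{in}}$, then place type~(a),(b),(c) arcs on $P_3,P_2,P_1$---and your disjointness check for same-type arcs is the one nontrivial verification; so your argument is correct and is precisely what the paper leaves implicit.

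One small remark: your treatment of condition~(4) and of components of $D$ disjoint from $\gamma$ is slightly off-target. Under transversality (which you assume), at every binding point one incident arc is outside $\gamma$ and the other inside, so adjacent arcs along $D$ are automatically of different types; condition~(4) is thus already built into your setup rather than being an extra non-degeneracy hypothesis you need to invoke. And a component of $D$ missing $\gamma$ would mean the given $\gamma$ with its given $n$ is the hypothesis you must work with, so ``enlarging $\gamma$'' is not available; in practice such a component either lies outside $\gamma$ (hence is crossingless and sits on $P_3$) or lies inside and, if it has crossings, violates condition~(3), so the hypothesis itself excludes the problematic case. Neither point affects the correctness of your main construction.
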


Now we recall the stacked equilateral triangle configuration of the Hopf link introduced in Section~\ref{sec:example}.
Then a three-page presentation can be constructed by aligning the triangles along an axis that passes through their centroids and connecting them with arcs according to their adjacency relations as drawn in Figure~\ref{fig:trans}.
We observe that each equilateral triangle corresponds to a binding point.
Assuming the ribbon has unit width, the length of ribbon represented by each equilateral triangle is $\frac{1}{\sqrt{3}}$.
Hence, the total length of the resulting folded ribbon knot is given by $\frac{1}{\sqrt{3}}$ times the number of binding points.

\begin{figure}[h!]
	\includegraphics{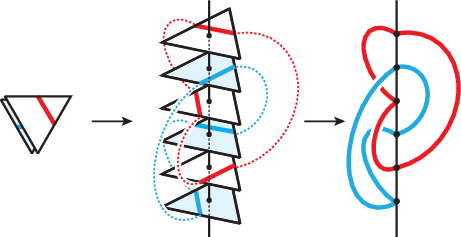}
	\caption{Conversion from a folded ribbon link to a three-page presentation}
	\label{fig:trans}
\end{figure}

As in the example of the Hopf link, let us consider the process of constructing a link by folding a ribbon along equilateral triangle shapes.
When the ribbon is wrapped along each component, it becomes apparent that each edge is folded sequentially along the sides of the triangles.
This observation implies that, in the corresponding three-page presentation, the arcs of each component traverse the pages in a cyclic order as drawn in Figure~\ref{fig:rule}.

\begin{figure}[h!]
	\includegraphics{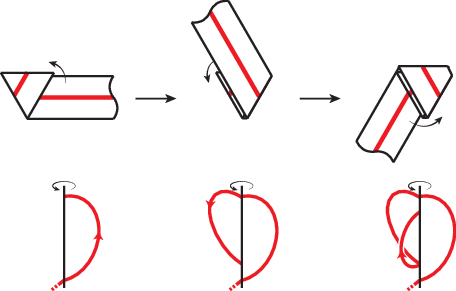}
	\caption{Details of the conversion process}
	\label{fig:rule}
\end{figure}

Based on this cyclic behavior, we introduce the following definition.

\begin{definition}
A three-page presentation is \textit{rotated} if the arcs of each component traverse the pages cyclically.
\end{definition}

Note that the cyclic order in which the arcs of each component traverse the pages is not fixed.
In particular, reversing this order does not affect the definition of a rotated three-page presentation.
It is immediate that the number of arcs in a rotated three-page presentation is a multiple of three.
We now introduce a key lemma that will play a central role in the proof of our main results.

\begin{lemma} \label{lem:length}
Let $L$ be a link that admits a rotated three-page presentation consisting of $3n$ arcs. Then $L$ can be realized as a folded ribbon link of length $\sqrt{3}n$ with unit width.
\end{lemma}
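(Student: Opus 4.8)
The plan is to reverse the conversion described in Figures~\ref{fig:trans}--\ref{fig:rule}: starting from a rotated three-page presentation with $3n$ arcs, I would build an explicit folded ribbon link out of $3n$ stacked equilateral triangles, each of unit width contributing ribbonlength $\tfrac{1}{\sqrt{3}}$, for a total of $\tfrac{3n}{\sqrt3}=\sqrt3\,n$. First I would place the $3n$ binding points along a straight axis (the binding circle, opened up), and at each binding point erect an equilateral triangle of unit height standing transverse to the axis, with its centroid on the axis; the three ``corners'' of each triangle correspond to the three pages. Since the presentation is rotated, the arcs of each component cycle through pages $1\to2\to3\to1\to\cdots$, so consecutive arcs of a component meet triangles at corners that differ by one cyclic step — this is exactly the local picture in which a flat ribbon can be folded from one triangular segment into the next along a crease, as in the Hopf-link model.

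The key steps, in order, are: (i) fix the geometric template — choose the size of the triangles and the spacing of binding points along the axis so that the $3n$ triangles stack without unwanted overlap; (ii) for each component, route a ribbon of unit width through the triangles it visits, in the cyclic page order dictated by the rotated presentation, folding the ribbon flat at each fold line where it passes from one triangle to the next; here each triangular ``panel'' of the ribbon is an equilateral triangle of height $1$, whose core (the segment of the knot) has length $\tfrac{1}{\sqrt{3}}$ — I would include the short computation that an equilateral triangle of height $1$ has side $\tfrac{2}{\sqrt3}$ and that the relevant core arc through it has length $\tfrac{1}{\sqrt3}$; (iii) verify that crossings are realized correctly: arcs on page~1 pass under, arcs on page~2 pass over (as in the circular three-page convention recalled earlier), so the over/under data of the presentation is faithfully reproduced by the stacking order of the flat triangular panels; (iv) check the two conditions in the definition of a folded ribbon link — that the ribbon is flat with disjoint fold lines, and that the core is a finite union of straight segments carrying the correct crossing information; (v) sum the contributions: $3n$ triangular panels, each of core-length $\tfrac{1}{\sqrt3}$, give total length $\sqrt3\,n$ at unit width.

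The main obstacle I expect is step (iii) together with the disjointness part of (iv): one must be sure that when several arcs occupy the same page in the three-page presentation, the corresponding ribbon panels can be nested at that corner of the triangle stack without their fold lines colliding and without introducing spurious crossings. The rotated condition helps, because within each component the page usage is forced to cycle, so no component meets the same page twice in a row; for distinct arcs (possibly from different components) sharing a page, I would order the panels radially according to the binding indices of their endpoints — mirroring how the three-page presentation already orders arcs on a page — and push each slightly off the common corner, using the $\varepsilon$-neighborhood idea already invoked for binding circles, so that all fold lines become disjoint and the only crossings created are those between a page-1 arc and a page-2 arc that genuinely cross in the diagram. Once this nesting is set up carefully, the flatness of each individual panel is automatic and the length count is immediate, completing the proof.
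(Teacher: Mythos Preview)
Your proposal is correct and follows essentially the same construction as the paper's proof: place an equilateral triangle at each of the $3n$ binding points, connect them according to the arcs of the rotated presentation, and observe that each triangle contributes $\tfrac{1}{\sqrt{3}}$ to the core length. The paper's argument is in fact terser than yours---it does not spell out the verification of disjoint fold lines or the correct realization of over/under crossings that you outline in steps~(iii)--(iv)---so your plan is, if anything, a more careful version of the same idea.
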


\begin{proof}
Suppose that we are given a rotated three-page presentation.
For each binding point, we place an equilateral triangle centered on the binding axis.
The connectivity among these triangles is then determined by the arcs in the rotated three-page presentation.
As observed in the previously discussed Hopf link example, each equilateral triangle corresponds to a ribbon segment of length $\frac{1}{\sqrt{3}}$ when the ribbon has unit width.
Therefore, this construction yields a folded ribbon realization whose total ribbonlength is equal to $\frac{1}{\sqrt{3}}$ times the number of binding points.
\end{proof}

As the triangles are connected sequentially, the front and back sides of the ribbon are determined alternately.
Consequently, if the number of triangles forming a component of the link is odd, the corresponding ribbon is one-sided, whereas if it is even, the ribbon is two-sided.

\section{Alternating diagrams with bipartite dual graphs} \label{sec:alt}

In this section, we introduce definitions and basic properties related to alternating diagrams with bipartite dual graphs.
A link is said to be \textit{alternating} if it admits a diagram in which the crossings alternate between over and under along each component of the link.
In a link diagram, a crossing is called \textit{nugatory} if there exists a simple closed curve that passes through the crossing and does not intersect the diagram elsewhere.
A link diagram is called \textit{reduced} if it has no nugatory crossings.
Using techniques based on the Jones polynomial,
Kauffman~\cite{K2}, Murasugi~\cite{M}, and Thistlethwaite~\cite{Th} independently proved in 1987
that any reduced alternating diagram realizes the minimal crossing number of the corresponding link.

Given a link diagram $D$, we apply a checkerboard coloring to the regions of the diagram.
The \textit{dual graph} $\Gamma(D)$ is constructed by placing a vertex in each shaded region of $D$ and connecting two vertices by an edge whenever the corresponding regions meet at a crossing.
Note that $\Gamma(D)$ is, in general, a multigraph and may contain loops or multiple edges.
Each edge of $\Gamma(D)$ is assigned a sign determined by the local crossing information of $D$.
For a detailed description of this construction and the associated sign conventions, we refer the reader to Adams's book~\cite{A}.

If a given link diagram $D$ is alternating, then every edge of the dual graph $\Gamma(D)$ has the same sign.
Without loss of generality, we may assume that all edges carry the positive sign, since otherwise we can replace $D$ with its mirror image.
This assumption is justified because the ribbonlength is invariant under taking mirror images.
Therefore, the signs on the edges of the dual graph can be ignored when the diagram is alternating.

A bipartite multigraph is a graph whose vertex set is partitioned into two disjoint subsets, such that edges connect only vertices from different subsets.
Multiple edges between the same pair of vertices are allowed, but no edges may connect two vertices within the same subset.
Thus there are no loops in a bipartite multigraph.
In particular, it contains no odd cycles.
Our goal is to establish an improved upper bound on the ribbonlength for alternating links that admit a minimal crossing diagram whose dual graph is bipartite as drawn in Figure~\ref{fig:bipartite}.

\begin{figure}[h!]
	\includegraphics{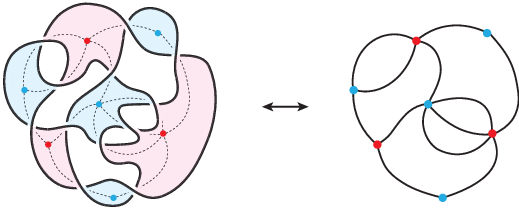}
	\caption{A minimal crossing alternating diagram with a bipartite dual graph}
	\label{fig:bipartite}
\end{figure}

Now we assume that there is a nugatory crossing in $D$.
The edge of $\Gamma(D)$ corresponding to this crossing is either a loop or a cut-edge, as drawn in Figure~\ref{fig:dual}.
If the crossing gives the unique connection between two parts of $\Gamma(D)$, then the corresponding edge is a cut-edge, that is, an edge whose removal disconnects the graph.
Otherwise, if the two incident shaded regions coincide, then the edge is a loop.
Conversely, if $\Gamma(D)$ contains a loop or a cut-edge, then the associated crossing admits a simple closed curve intersecting $D$ only at that crossing and is therefore nugatory.
In particular, the diagram $D$ is reduced if and only if $\Gamma(D)$ has no loops and no cut-edges.

\begin{figure}[h!]
	\includegraphics{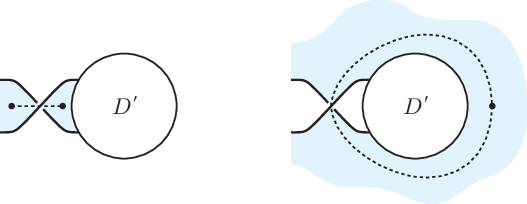}
	\caption{Converting rule from link diagram to its dual graph}
	\label{fig:dual}
\end{figure}

The \textit{connected sum} $K_1 \# K_2$ of two knots $K_1$ and $K_2$ is a fundamental operation in knot theory that produces a new knot by joining the two knots along short arcs.
More precisely, one removes a small open arc from each knot and connects the resulting endpoints by two arcs in an orientation-preserving manner.
The resulting knot type is well defined up to ambient isotopy and is independent of the specific choices made in the construction.
This operation extends naturally to links by forming connected sums of selected components.
In the case of alternating knots, the crossing number is additive under the connected sum operation.

We now examine how the property of admitting an alternating diagram with a bipartite dual graph behaves under the connected sum operation.

\begin{lemma} \label{lem:conn}
Let $L_1$ and $L_2$ be alternating links, each admitting an alternating diagram whose dual graph is bipartite.
If the connected sum $L_1 \# L_2$ is alternating, then it also admits an alternating diagram whose dual graph is bipartite.
\end{lemma}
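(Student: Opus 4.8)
The plan is to build the required diagram of $L_1 \# L_2$ by hand, as a connected sum of the two given diagrams, and then to read off its dual graph. Fix reduced alternating diagrams $D_1$ and $D_2$ of $L_1$ and $L_2$ whose dual graphs $\Gamma(D_1)$, $\Gamma(D_2)$ are bipartite; this costs nothing, since removing a nugatory crossing contracts the associated cut-edge (a loop cannot occur, a bipartite graph having none), and contracting a cut-edge, which lies on no cycle, cannot create an odd cycle --- so bipartiteness and alternation both survive. Now form $D = D_1 \# D_2$: choose an edge $\alpha$ of $D_1$ and an edge $\beta$ of $D_2$ lying on the two components that are to be summed, cut both, and rejoin them by two disjoint bands meeting no crossing. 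A connected sum creates and destroys no crossing, so the crossings of $D$ are exactly those of $D_1$ together with those of $D_2$.

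First I would arrange that $D$ is alternating. Since $D_1$ is alternating, the crossings at the two ends of $\alpha$ are, for the strand carrying $\alpha$, an over-crossing at one end and an under-crossing at the other (consecutive passages alternate); call these the over-end and the under-end of $\alpha$, and similarly for $\beta$. Placing $D_2$ inside a face of $D_1$ adjacent to $\alpha$ and rotating it appropriately --- an orientation-preserving motion of the plane, never a reflection, so the link type is unchanged --- one can realize the splice that joins the over-end of $\alpha$ to the under-end of $\beta$ and the under-end of $\alpha$ to the over-end of $\beta$. With this choice alternation is inherited inside each of the two pieces, while at the two junctions the strand reads ``over, band, under'' and ``under, band, over''; hence $D$ is an alternating diagram of $L_1 \# L_2$.

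Next I would identify $\Gamma(D)$. The surgery merges the two regions flanking $\alpha$ with the two regions flanking $\beta$, creating one new shaded region $S$ (from the two shaded ones) and one new white region (from the two white ones), all other regions being untouched. So, once $D$ is given the checkerboard coloring induced by those of $D_1$ and $D_2$, every vertex and every edge of $\Gamma(D_1)$ and of $\Gamma(D_2)$ persists in $\Gamma(D)$, the only change being that the two vertices coming from the merged shaded regions are identified to the single vertex $S$. Thus $\Gamma(D)$ is the one-vertex union of $\Gamma(D_1)$ and $\Gamma(D_2)$ at $S$, and a one-vertex union of bipartite graphs is bipartite: take bipartitions of $\Gamma(D_1)$ and $\Gamma(D_2)$, swap the two color classes of the second if necessary so that the identified vertices lie in matching classes, and take the union of the classes; since every cycle of $\Gamma(D)$ lies inside $\Gamma(D_1)$ or inside $\Gamma(D_2)$, this is a proper $2$-coloring. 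This yields the lemma. (A shortcut bypassing the coloring discussion below: a connected plane graph is bipartite iff each face has even length, the faces of $\Gamma(D)$ are the white regions of $D$, and all of these have even length except the merged one, whose length is a sum of two even numbers.)

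The step needing the most care --- and the one I expect to be the crux --- is the assertion, used above, that the colorings of $D_1$ and $D_2$ genuinely assemble into one checkerboard coloring of $D$; equivalently, that the region along $\alpha$ that is \emph{shaded} in $D_1$ is glued to the region along $\beta$ that is \emph{shaded} in $D_2$, not to the white one. If it were glued to the white one, the only repairs would be to recolor $D_2$, replacing $\Gamma(D_2)$ by its planar dual (not generally bipartite), or to pass to the mirror image of $D_2$, which would change the link. To exclude this one uses the fact recalled earlier that in an alternating diagram all crossings carry the same sign: tracking that sign through a crossing shows that along any strand the shaded region always lies on the same side relative to the over/under data --- say, on the left as one walks from the over-end to the under-end of each edge --- and the over-to-under splice chosen above is precisely the one that traverses both $\alpha$ and $\beta$ in that direction along the boundary of the merged region, so shaded is automatically matched with shaded. (As a byproduct, the construction shows that $L_1 \# L_2$ is always alternating in this situation, so the hypothesis of the lemma is automatic.)
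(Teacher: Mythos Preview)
Your approach is the same as the paper's: build an alternating connected-sum diagram $D = D_1 \# D_2$, observe that $\Gamma(D)$ is the one-vertex union of $\Gamma(D_1)$ and $\Gamma(D_2)$, and conclude bipartiteness by aligning the two bipartitions. You are in fact more careful than the paper on the crucial compatibility step --- why the checkerboard coloring of $D$ restricts to the \emph{given} bipartite colorings $C_1$, $C_2$ rather than to the opposite coloring on one side --- which the paper simply asserts without comment.

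Your resolution of that step, however, has a gap. The statement that the shaded region lies ``on the left as one walks from the over-end to the under-end'' is valid within a single alternating diagram, but the side (left or right) is determined by the sign $\epsilon_i$ of the coloring $C_i$, and nothing in the hypotheses forces $\epsilon_1 = \epsilon_2$. When $\epsilon_1 \neq \epsilon_2$ --- take $L_1 = 3_1$ and $L_2 = \overline{3_1}$, each with its unique bipartite Tait coloring --- your over-to-under splice matches the shaded side of $\alpha$ to the \emph{white} side of $\beta$, so $\Gamma(D)$ becomes the vertex-union of $\Gamma(D_1)$ with the opposite Tait graph of $D_2$ (here a $3$-cycle), and bipartiteness fails. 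The paper's proof carries the same implicit assumption; indeed, in the discussion preceding the corollary in Section~\ref{sec:proof} the paper quietly reinstates the hypothesis of ``compatible crossing signs'', which is exactly the condition $\epsilon_1 = \epsilon_2$ your argument needs.
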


\begin{proof}
Let $D_1$ and $D_2$ be alternating diagrams of $L_1$ and $L_2$, respectively, whose dual graphs $\Gamma(D_1)$ and $\Gamma(D_2)$ are bipartite.
For each $i=1,2$, choose a small disk in the plane that meets $D_i$ in a single embedded arc and contains no crossings.
A connected sum diagram $D$ of $L_1 \# L_2$ is obtained by removing these arcs and gluing the resulting endpoints in the standard way.
By assumption, $L_1 \# L_2$ is alternating, and we choose the gluing so that $D$ is an alternating diagram of $L_1 \# L_2$.

Under this operation, the dual graph $\Gamma(D)$ is obtained by identifying a vertex $v_1$ of $\Gamma(D_1)$ with a vertex $v_2$ of $\Gamma(D_2)$.
If $v_1$ and $v_2$ have the same color in the bipartitions of $\Gamma(D_1)$ and $\Gamma(D_2)$, then $\Gamma(D)$ is immediately bipartite.
If $v_1$ and $v_2$ have different colors, we interchange the two parts of the bipartition of $\Gamma(D_2)$.
Since this operation preserves bipartiteness, the resulting graph $\Gamma(D)$ is bipartite in either case.
\end{proof}

\begin{figure}[h!]
	\includegraphics{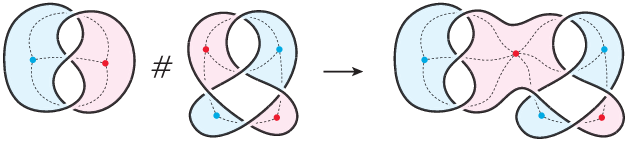}
	\caption{connected sum}
	\label{fig:conn}
\end{figure}

Theorem~\ref{thm:main} concerns an upper bound on the ribbonlength in terms of the crossing number.
Therefore, it is essential to work with minimal crossing diagrams.
The following lemma guarantees that, given any alternating diagram whose dual graph is bipartite, there exists a minimal crossing diagram with the same property.

\begin{lemma} \label{lem:minimal}
If a link has an alternating diagram whose dual graph is bipartite, then it admits a minimal crossing diagram whose dual graph is also bipartite.
\end{lemma}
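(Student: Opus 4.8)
The plan is to reduce the given diagram to a reduced alternating one, removing nugatory crossings one at a time, and to check that each such move preserves bipartiteness of the dual graph. Since a reduced alternating diagram realizes the minimal crossing number by the theorem of Kauffman, Murasugi, and Thistlethwaite recalled in this section, this will suffice.

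First I would set up the induction on the number of nugatory crossings. If the alternating diagram $D$ of $L$ with $\Gamma(D)$ bipartite is already reduced, there is nothing to prove. Otherwise $D$ has a nugatory crossing $c$, and, as explained in Section~\ref{sec:alt}, the edge $e$ of $\Gamma(D)$ corresponding to $c$ is either a loop or a cut-edge; since a bipartite multigraph has no loops, $e$ is a cut-edge. Write $e = v_1 v_2$ and $\Gamma(D) - e = \Gamma_1 \sqcup \Gamma_2$ with $v_i \in \Gamma_i$. The curve witnessing nugatoriness meets $D$ only at $c$, so it cuts $D$ into two two-endpoint tangles with common endpoints at $c$; undoing $c$ means rotating one of these tangles by a half-turn about the axis along which its two strands leave $c$. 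The resulting diagram $D'$ has one fewer crossing. This half-turn both reflects the rotated sub-diagram and reverses all of its over/under information, and I would check directly (it is immediate on the one-crossing diagram of the unknot) that the reversal of the sub-diagram exactly compensates for the deletion of $c$ from the cyclic over/under sequence, so that $D'$ is again alternating. On the level of dual graphs the move merges precisely the two shaded corners at $c$ — that is, it identifies $v_1$ with $v_2$ — deletes $e$, and leaves every other region and crossing unchanged; hence $\Gamma(D') = \Gamma(D)/e$, the contraction of the cut-edge $e$.

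It then remains to prove the following purely combinatorial statement: \emph{contracting a cut-edge of a bipartite multigraph yields a bipartite multigraph.} I would argue via odd cycles. Let $v$ be the vertex of $G/e$ obtained by identifying the endpoints $v_1, v_2$ of the cut-edge $e$, and suppose $G/e$ had an odd cycle $C$. If $C$ avoids $v$, then $C$ is an odd cycle of $G$ — impossible. If $C$ passes through $v$, its two edges at $v$ come from edges of $G$ incident to $v_1$ or to $v_2$; if both come from $v_1$ (resp.\ both from $v_2$), then $C$ lifts to a closed walk of $G$ based at $v_1$ (resp.\ $v_2$) of the same length, again odd and impossible; and if one comes from $v_1$ and the other from $v_2$, then $C$ lifts to a path of $G$ from $\Gamma_1$ to $\Gamma_2$ avoiding both $v_1$ and $v_2$, hence avoiding $e$, contradicting that $e$ is a cut-edge. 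Thus $G/e$ has no odd cycle. Applying this with $G = \Gamma(D)$ gives that $\Gamma(D')$ is bipartite, and since the crossing number strictly dropped we may iterate; after finitely many steps we reach a reduced alternating diagram with bipartite dual graph, which by the cited theorem has exactly $c(L)$ crossings. The same argument applies with no change when $D$ is disconnected.

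I expect the main obstacle to be the middle step: the precise verification that undoing a nugatory crossing is \emph{exactly} the contraction of the associated cut-edge of the dual graph. This needs a careful local-to-global analysis of the checkerboard regions across the half-turn — confirming that only the two shaded corners at $c$ are merged and that no other region or crossing is disturbed — as well as the verification that the move keeps the diagram alternating. The graph-theoretic lemma and the inductive packaging are routine by comparison; still, one should highlight that it is exactly the cut-edge hypothesis, and nothing weaker, that makes the bipartiteness argument work, since contracting an edge of a $4$-cycle, for instance, already destroys bipartiteness.
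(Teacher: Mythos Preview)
Your proposal is correct and takes essentially the same approach as the paper: iteratively remove nugatory crossings, observe that each corresponds to contracting a cut-edge of the dual graph, verify that bipartiteness is preserved, and finish with Kauffman--Murasugi--Thistlethwaite. The paper's bipartiteness check is a shade simpler than your odd-cycle case analysis --- it just notes that a cut-edge lies in no cycle whatsoever, so contracting it cannot create any new cycle --- but otherwise the arguments coincide.
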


\begin{proof}
Let $D$ be an alternating diagram whose dual $\Gamma(D)$ is bipartite.
If $D$ is reduced, then $D$ is a minimal crossing diagram by the results of Kauffman, Murasugi, and Thistlethwaite, and we are done.

Otherwise, suppose that $D$ has a nugatory crossing $c$.
Let $e_c$ be the edge of $\Gamma(D)$ corresponding to $c$.
Since $\Gamma(D)$ is bipartite, $e_c$ is not a loop.
Because $c$ is nugatory, $e_c$ is a cut-edge in $\Gamma(D)$.
Removing $c$ from $D$ corresponds to contracting $e_c$ in $\Gamma(D)$.
Since $e_c$ is a cut-edge, it is not contained in any cycle of $\Gamma(D)$.
Therefore contracting $e_c$ cannot create a new odd cycle, and the resulting dual graph remains bipartite.
The diagram obtained by removing $c$ from $D$ is still alternating and has fewer crossings.
Iterating this process terminates at a reduced alternating diagram whose dual graph is bipartite.
\end{proof}

By the lemma, it suffices to consider alternating diagrams with bipartite dual graphs, without assuming minimality.

\section{Construction of Folded Ribbon Links} \label{sec:proof}
In this section, we investigate binding circles for rotated circular three-page presentations associated with alternating diagrams whose dual graphs are bipartite.
To complete the proof of Theorem~\ref{thm:main}, it suffices to prove the following theorem.

\begin{theorem} \label{thm:main2}
Let $L$ be an alternating link, and let $D$ be an alternating link diagram of $L$ with $n$ crossings.
If the dual graph $\Gamma(D)$ is bipartite, then $L$ admits a rotated circular three-page presentation consisting of exactly $3n$ arcs.
\end{theorem}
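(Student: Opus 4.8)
The plan is to deduce Theorem~\ref{thm:main2} from Lemma~\ref{lem:binding} by producing, for the given diagram $D$, a binding circle $\gamma$ with exactly $3n$ binding points whose associated three-page presentation is rotated. I would first normalise: fix a checkerboard colouring of $D$ and, using the hypothesis, $2$-colour its shaded regions as $\mathcal{B}\sqcup\mathcal{W}$ so that every crossing is incident to one region of each colour; by Lemma~\ref{lem:minimal} I may assume $D$ reduced, and (treating split diagrams componentwise, as in the discussion before Lemma~\ref{lem:binding}) connected. Since $D$ is alternating, after possibly interchanging the two colour classes or passing to the mirror image of $D$ --- neither of which affects $\mathrm{Rib}$ or $c$ --- the local picture at every crossing is uniform: reading the four corners cyclically one sees unshaded, $\mathcal{B}$, unshaded, $\mathcal{W}$, with the over-strand separating the two shaded corners. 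I would then orient the components of $L$; because $D$ is alternating the crossings along each component alternate over/under, so the edges of $D$ (the strand segments between consecutive crossings) alternate between those running from an over-crossing to an under-crossing and those running the other way. Call the first kind \emph{transition edges}. Counting over all components there are exactly $n$ of them --- one per crossing, namely the edge along which the over-strand departs that crossing.

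The construction of $\gamma$ proceeds from local models at the crossings: near each crossing $\gamma$ makes a small detour that keeps the crossing and the germs of its two through-strands inside while pushing the near-crossing portion of each adjacent transition edge outward into the unshaded region it borders; each transition edge is thereby cut in its middle third, that middle third becoming an outside (page-$3$) arc, and each non-transition edge is merely made tangent to $\gamma$ at its midpoint. If these local pieces assemble into a single simple closed curve, it is a binding circle: it meets $D$ transversally in $2n$ points (two near each crossing, at the cuts of the transition edges) and tangentially in $n$ points (the midpoints of the non-transition edges), so there are $3n$ binding points; the inside arcs are the over-strand germs (meeting only over-crossings, type (3b)) and the under-strand germs (type (3c)), the outside arcs are the $n$ middle thirds (type (3a)), and no two arcs of the same type are adjacent along $\gamma$ because shaded and unshaded corners alternate around each crossing --- conditions (1)--(4). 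Tracing any component then shows that it meets pages $2$ and $1$ alternately at its over- and under-crossings, with exactly one page-$3$ arc inserted across each transition edge and none across the others, so the pages met cycle as $2,3,1$ and the presentation is rotated. Lemma~\ref{lem:binding} then gives the three-page presentation of $L$ with $3n$ arcs, and Lemma~\ref{lem:length} yields Theorem~\ref{thm:main}.

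The step I expect to be genuinely hard is the one flagged as conditional above: showing that the local detours fit together into a \emph{single} simple closed curve $\gamma$, with all intersections in transverse or tangential position and the transition-edge bands routed outward without mutual crossings. This is precisely where bipartiteness is indispensable. The $2$-colouring of the shaded regions (equivalently, the absence of odd cycles in $\Gamma(D)$) is what makes the local choices cohere around every cycle of the diagram, so that the outward-pushed arcs can be consistently nested and $\gamma$ closes up; in a non-bipartite alternating diagram one would meet a parity defect along some odd cycle of shaded regions, producing either a disconnected curve, a failure of condition (4), or a page pattern along some component that is not the rotation $2,3,1$ --- for instance one with a superfluous page-$3$ arc, or with the cyclic order of pages reversed partway around. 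Pinning down this coherence (identifying the combinatorial quantity that the colouring controls and checking it on every cycle), together with the planarity of the outward routing, is the crux of the argument; the bookkeeping for the $3n$ count, conditions (1)--(3), and the passage from Theorem~\ref{thm:main2} to Theorem~\ref{thm:main} are routine.
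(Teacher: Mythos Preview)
Your structural analysis is sound: the binding circle should meet $D$ in $3n$ points, with $n$ outside arcs carried by half of the edges and tangencies on the other half, and the alternating condition then forces the cyclic page pattern. The gap you flag is real, but your edge-based description via an \emph{arbitrary} orientation does not set it up well. The partition into transition and non-transition edges depends on the orientation chosen on each component; for a generic choice the two transition edges at a crossing need not border a common shaded region, and then there is no coherent local model for $\gamma$ to begin with. What you actually need is an orientation-free way to single out the $n$ edges that become outside arcs, and the bipartition hands you this directly: every edge of $D$ is a side of exactly one shaded region, so the edges split into $\mathcal{B}$-side and $\mathcal{W}$-side edges, $n$ of each, and one checks that along any component these alternate. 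Your transition edges agree with (say) the $\mathcal{W}$-side edges only after orienting each component compatibly with the $2$-colouring, so the orientation is a red herring and the bipartition is doing the work.

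With this reformulation the paper's construction becomes region-based rather than crossing-based, and the gluing problem evaporates. Since $\Gamma(D)$ is bipartite, every crossing lies on the boundary of exactly one $\mathcal{B}$-region. For each $\mathcal{B}$-region $R$ one forms a topological disk $D_R$ containing $R$ together with all crossings on $\partial R$, arranged so that $\partial D_R$ is tangent to each side of $R$ once and cuts transversely the two $\mathcal{W}$-side edges at each such crossing. Each $\partial D_R$ is manifestly a simple closed curve, and the disks are pairwise disjoint because distinct $\mathcal{B}$-regions share no crossing. To obtain a single binding circle one merely takes connected sums of the $D_R$ along a spanning tree of $\Gamma(D)$, routed through complementary regions so as to avoid $D$. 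The outside arcs are then exactly the (middles of the) $\mathcal{W}$-side edges, the tangencies sit on the $\mathcal{B}$-side edges, and your $3n$ count and rotated-page verification go through verbatim. The ``parity obstruction along odd cycles'' you anticipate is precisely the statement that without bipartiteness some crossing would border two shaded regions of the same colour, so no such family of disjoint disks $D_R$ covering all crossings exists.
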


\begin{proof}
Let $D$ be an alternating diagram of a link $L$ with $n$ crossings whose dual graph is bipartite, as assumed in the theorem.
We color the vertices of $\Gamma(D)$ red and blue according to its bipartition.
Our goal is to construct a suitable binding circle in $D$ that yields a rotated circular three-page presentation of $L$.
Then Lemma~\ref{lem:binding} guarantees that the resulting diagram is a rotated circular three-page presentation.

First, we consider the case that $\Gamma(D)$ is connected.
Assign to each crossing a sufficiently small disk centered at the crossing so that these disks are pairwise disjoint.
Without loss of generality, choose the red color class in the bipartition of $\Gamma(D)$ and merge the adjacent disks along the corresponding regions.
This results in a collection of disjoint disks corresponding to the red vertices of $\Gamma(D)$.
Each disk is then perturbed slightly so that it touches each side of the original region exactly once as drawn in Figure~\ref{fig:area}.

\begin{figure}[h!]
	\includegraphics{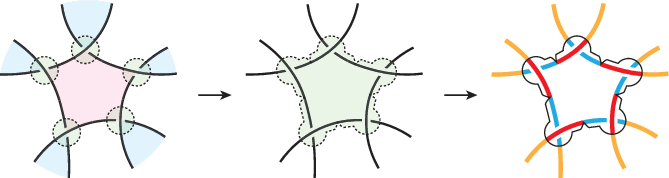}
	\caption{Constructing a circle around the chosen region}
	\label{fig:area}
\end{figure}

With this construction, every crossing of the diagram lies in the interior of one of these disks.
When the link diagram is cut along the boundaries of these disks, it is decomposed into over–strands, under–strands, and arcs lying outside the collection of disks.
For each crossing, there are exactly two arcs contained in the interior of the disks, one corresponding to an over–strand and the other to an under–strand.
Consequently, there are $2n$ such arcs lying inside the disks.
Together with the $n$ arcs lying outside the collection of disks, this decomposition produces a total of $3n$ arcs.
Moreover, since $D$ is alternating, when one traverses the link diagram following a suitable orientation, the arcs appear in a repeating pattern of lying outside the disks, passing as over–strands, and passing as under–strands.
Thus, the remaining task is to merge these disks appropriately so as to obtain a binding circle of a rotated circular three-page presentation.

The dual graph $\Gamma(D)$ is connected, and hence admits a spanning tree.
We fix such a spanning tree and merge the disks in the collection by performing connected sum operations successively along its edges.
As a result, we obtain a single disk that retains all the properties established above.
Therefore, the boundary of this disk is the desired binding circle as shown in Figure~\ref{fig:entire}.

\begin{figure}[h!]
	\includegraphics{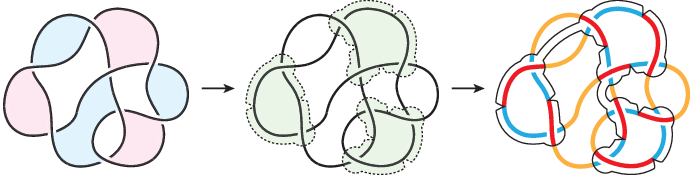}
	\caption{Connecting the circles to obtain a binding circle}
	\label{fig:entire}
\end{figure}

Now we consider the case that $\Gamma(D)$ is disconnected.
Then the original link diagram can be decomposed into several parts in the plane that can be separated by a simple closed curve disjoint from the diagram, which indicates that the diagram is a split diagram.
However, for each connected component one can construct a binding circle, and by taking their connected sum, one obtains the desired binding circle.
\end{proof}

Now we prove Theorem~\ref{thm:main}.

\begin{proof}[Proof of Theorem~\ref{thm:main}]
Let $L$ be an alternating link. 
Suppose $L$ admits an alternating diagram whose dual graph is bipartite. 
By Lemma~\ref{lem:minimal}, $L$ also has a minimal crossing diagram with a bipartite dual graph.  
Applying Theorem~\ref{thm:main2}, we then obtain a rotated circular three-page presentation of $L$ consisting of $3c(L)$ arcs. 
Finally, by Lemma~\ref{lem:length}, this yields the desired upper bound, and the proof is complete.
\end{proof}

Theorem~\ref{thm:main} not only strengthens the result of Kim, No, and Yoo, but also improves upon several known bounds for broad classes of alternating links.
If two alternating links $L_1$ and $L_2$ satisfy the conditions of the theorem and admit alternating diagrams with compatible crossing signs, then their connected sum $L_1 \# L_2$ admits an alternating diagram whose dual graph is bipartite.
Since the crossing number is additive for alternating links, the connected sum $L_1 \# L_2$ also satisfies the theorem.
Although connected sum is not our main focus, our method extends naturally to this setting.

\begin{corollary}
Let $L_1$ and $L_2$ be alternating links such that each admits an alternating diagram whose dual graph is bipartite.
Then
$$
\mathrm{Rib}(L_1 \# L_2) \le \sqrt{3}\, c(L_1 \# L_2).
$$
\end{corollary}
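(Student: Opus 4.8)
The plan is to reduce the corollary to a direct application of Theorem~\ref{thm:main}, the only real content being the verification that $L_1 \# L_2$ itself falls under the hypotheses of that theorem. First I would invoke Lemma~\ref{lem:minimal} to replace the given alternating diagrams of $L_1$ and $L_2$ by minimal crossing (hence reduced) alternating diagrams $D_1$ and $D_2$ whose dual graphs $\Gamma(D_1)$ and $\Gamma(D_2)$ are still bipartite. This step is not strictly necessary for the bound, but it makes the connect-sum construction cleaner, since one then glues along crossingless arcs of reduced diagrams exactly as in the proof of Lemma~\ref{lem:conn}.

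Next I would check that the connected sum $L_1 \# L_2$ is an alternating link, which is the hypothesis needed to apply Lemma~\ref{lem:conn}. Given the reduced alternating diagrams $D_1$ and $D_2$, choose in each a small disk meeting the diagram in a single crossingless arc. Because each $D_i$ is alternating, the over/under pattern along such an arc is forced, so one can perform the standard connect-sum gluing of $D_1$ and $D_2$ so that the resulting diagram $D$ of $L_1 \# L_2$ is again alternating; if the patterns do not match on the nose, one first reflects $D_2$, which changes neither the link type of $L_2$ nor the bipartiteness of $\Gamma(D_2)$ (and, by the mirror-invariance of ribbonlength noted in Section~\ref{sec:alt}, nothing is lost). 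Hence $L_1 \# L_2$ is alternating.

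With that in hand, Lemma~\ref{lem:conn} applies and yields an alternating diagram of $L_1 \# L_2$ whose dual graph is bipartite. Theorem~\ref{thm:main} now applies verbatim to the link $L_1 \# L_2$ and gives
$$\mathrm{Rib}(L_1 \# L_2) \le \sqrt{3}\, c(L_1 \# L_2),$$
which is the assertion. As a remark, one may additionally record that $c(L_1 \# L_2) = c(L_1) + c(L_2)$ by additivity of the crossing number for alternating links, so the bound can equivalently be phrased as $\sqrt{3}\,(c(L_1) + c(L_2))$, but this refinement plays no role in the proof of the corollary.

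I expect the only point requiring genuine care to be the middle step: arranging the connect sum to be alternating and, if necessary, passing to a mirror image so that the two alternating patterns agree across the gluing region. Everything else is a straightforward chaining of Lemma~\ref{lem:minimal}, Lemma~\ref{lem:conn}, and Theorem~\ref{thm:main}.
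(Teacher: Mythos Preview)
Your argument is correct and mirrors the paper's own justification, given in the paragraph immediately preceding the corollary: verify that $L_1\#L_2$ again admits an alternating diagram with bipartite dual graph via Lemma~\ref{lem:conn}, and then apply Theorem~\ref{thm:main} directly. The only imprecision is the claim that reflecting $D_2$ ``changes neither the link type of $L_2$''; a genuine mirror reflection does change it, but this step is in fact unnecessary, since one can always choose the cut arc in $D_2$ so that the orientation-respecting gluing is already alternating (the paper's phrase ``compatible crossing signs'' is glossing over the same point).
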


To the best of our knowledge, no previous work has addressed the behavior of folded ribbonlength under connected sum operations.
In particular, there have been no explicit upper bounds for the ribbonlength of a connected sum of links expressed in terms of the crossing number.

The significance of our result lies in deriving upper bounds on the ribbonlength from the rotated three-page presentation.
However, as the crossing number increases, links satisfying our assumptions become increasingly sparse.
To date, existing results related to ribbonlength have been systematically summarized by Chen et al.~\cite{CZDE}.
As immediate consequences of our main result, we obtain the following improved upper bounds on the ribbonlength.

\begin{corollary}
The following sharp upper bounds on the ribbonlength hold for the specified knots:
$$
\mathrm{Rib}(K)\leq
\begin{cases}
3\sqrt{3}, & \text{if } K \text{ is the trefoil knot},\\[4pt]
(n+2)\sqrt{3}, & \text{if } K = T_n \text{ with } n=1,3,5,\\[4pt]
6\sqrt{3}, & \text{if } K = P(2,2,2).
\end{cases}
$$
Here, $T_n$ denotes the twist knot with $n$ half twists, and $P(2,2,2)$ denotes the $(2,2,2)$-pretzel knot.
\end{corollary}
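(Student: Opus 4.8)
The plan is to verify, for each entry in the list, that the knot (or link) $K$ admits an alternating diagram whose checkerboard dual graph is bipartite, and then to read off the bound from Theorem~\ref{thm:main}. Concretely, for each $K$ I would: (1) fix an explicit alternating diagram $D$ of $K$; (2) compute one of the two checkerboard graphs of $D$; (3) check that this graph is bipartite and has neither a loop nor a cut-edge, so that $D$ is reduced; (4) invoke Kauffman--Murasugi--Thistlethwaite to conclude that $D$ realizes the crossing number, so the number of crossings of $D$ equals $c(K)$; and (5) apply Theorem~\ref{thm:main}, which then specializes $\sqrt{3}\,c(K)$ to the stated numerical value. Steps (2)--(3) are the only part that requires thought; the rest is bookkeeping.

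In all three cases the relevant checkerboard graph turns out to be a generalized theta graph $\Theta(a_1,\dots,a_k)$ — two vertices joined by $k$ internally disjoint paths of lengths $a_1,\dots,a_k$ — and such a graph is bipartite precisely when all the $a_i$ have the same parity, since its cycles are the $\Theta(a_i,a_j)$, of length $a_i+a_j$. For the trefoil, the standard $3$-crossing alternating diagram has $\Theta(1,1,1)$ (two vertices, three parallel edges) as one checkerboard graph; all parts are odd, so it is bipartite, and Theorem~\ref{thm:main} gives $\mathrm{Rib}(3_1)\le 3\sqrt{3}$. For the twist knot $T_n$, the standard diagram consisting of a $2$-crossing clasp and a vertical twist region of $n$ crossings has $n+2$ crossings, and one checkerboard graph is $\Theta(1,1,n)$ (the clasp contributing a pair of parallel edges, the twist region a path of $n$ edges between the same two regions); this is bipartite exactly when $n$ is odd, and for $n=1,3,5$ it yields $\mathrm{Rib}(T_n)\le (n+2)\sqrt{3}$ — note that $n=1$ recovers the trefoil and that $T_3=5_2$, $T_5=7_2$. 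For $P(2,2,2)$, the standard $6$-crossing pretzel diagram has $\Theta(2,2,2)$ as one checkerboard graph (the two regions being the top and bottom disk regions, each twist band contributing a path of $2$ edges); all parts are even, hence bipartite, giving $\mathrm{Rib}(P(2,2,2))\le 6\sqrt{3}$. In each case one checks that $\Theta(1,1,1)$, $\Theta(1,1,n)$, and $\Theta(2,2,2)$ have no loop and no cut-edge, so the diagrams are reduced and the crossing counts $3$, $n+2$, and $6$ are indeed the crossing numbers. Finally, comparing with the estimates tabulated by Chen et al.~\cite{CZDE} shows that each of these bounds improves the previously recorded one.

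The main point to get right is the bipartiteness test, together with the observation that one must use the correct checkerboard coloring: in all three cases the other checkerboard graph is the planar dual of the theta graph, namely a triangle with one of its edges made parallel, which contains an odd cycle and so is never bipartite. The parity phenomenon for the twist knots — bipartite iff $n$ is odd — is exactly the statement that the cycle passing once through the clasp and once through the full twist region has even length $n+1$ precisely when $n$ is odd; this is why the figure-eight knot ($T_2$), $6_1$ ($T_4$), and the remaining even twist knots are not covered by this method. Everything else (that the chosen diagrams are alternating, and that they are reduced) is standard.
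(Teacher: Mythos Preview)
Your proposal is correct and follows exactly the route the paper intends: the corollary is stated in the paper without proof, as an ``immediate consequence'' of Theorem~\ref{thm:main}, so the work consists precisely of verifying the bipartiteness hypothesis for each listed knot and reading off $\sqrt{3}\,c(K)$. Your unified description via generalized theta graphs $\Theta(a_1,\dots,a_k)$, together with the parity criterion for their bipartiteness, is a clean way to carry out that verification and correctly explains why only the odd twist knots appear.
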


\section*{Declaration of generative AI and AI-assisted technologies in the manuscript preparation process}

During the preparation of this manuscript, the author used an AI-based language tool to improve grammar and readability. The author reviewed and revised all content and takes full responsibility for the final version of the manuscript.

\section*{Acknowledgements}
The author would like to thank Professor Hyoungjun Kim (Kyungpook National University) and Professor Sungjong No (Kyonggi University) for their valuable comments on the proof and their suggestions that contributed to the examples.
This study was supported by Basic Science Research Program of the National Research Foundation of Korea (NRF) grant funded by the Korea government Ministry of Education (RS-2023-00244488).

\bibliography{ribgen} 

@article{KMRT,
	author = {Kennedy, Brooke and Mattman, Thomas W and Raya, Roberto and Tating, Dan},
	date-added = {2021-09-01 20:36:46 +0900},
	date-modified = {2021-09-01 20:37:16 +0900},
	journal = {Journal of Knot Theory and Its Ramifications},
	number = {01},
	pages = {13--23},
	publisher = {World Scientific},
	title = {Ribbonlength of torus knots},
	volume = {17},
	year = {2008}}

@incollection{K,
	author = {Kauffman, Louis H},
	booktitle = {Physical And Numerical Models In Knot Theory: Including Applications to the Life Sciences},
	date-added = {2021-09-01 20:35:19 +0900},
	date-modified = {2021-09-01 20:35:42 +0900},
	pages = {495--506},
	publisher = {World Scientific},
	title = {Minimal flat knotted ribbons},
	year = {2005}}

@article{DHLM,
  title={Ribbonlength of families of folded ribbon knots},
  author={Denne, Elizabeth and Haden, John Carr and Larsen, Troy and Meehan, Emily},
  journal={Involve, a Journal of Mathematics},
  volume={15},
  number={4},
  pages={591--628},
  year={2023},
  publisher={Mathematical Sciences Publishers}
}

@article{DKTZ,
	author = {Denne, Elizabeth and Kamp, Mary and Terry, Rebecca and Zhu, Xichen Catherine},
	date-added = {2021-09-01 20:32:53 +0900},
	date-modified = {2021-09-01 20:33:05 +0900},
	journal = {Knots, Links, Spatial Graphs, and Algebraic Invariants},
	pages = {37},
	title = {Ribbonlength of folded ribbon unknots in the plane},
	volume = {689},
	year = {2015}}

@article{D,
  title={Ribbonlength and crossing number for folded ribbon knots},
  author={Denne, Elizabeth},
  journal={Journal of Knot Theory and Its Ramifications},
  volume={30},
  number={04},
  pages={2150028},
  year={2021},
  publisher={World Scientific}
}

@article{T,
  title={Linear upper bound on the ribbonlength of torus knots and twist knots},
  author={Tian, Grace},
  journal={arXiv preprint arXiv:1809.02095v1},
  year={2018}
}

@article{KNY1,
  title={Ribbonlength of twisted torus knots},
  author={Kim, Hyoungjun and No, Sungjong and Yoo, Hyungkee},
  journal={Journal of Knot Theory and Its Ramifications},
  volume={31},
  number={12},
  pages={2250092},
  year={2022},
  publisher={World Scientific}
}

@article{KNY2,
  title={Folded ribbonlength of 2-bridge knots},
  author={Kim, Hyoungjun and No, Sungjong and Yoo, Hyungkee},
  journal={Journal of Knot Theory and Its Ramifications},
  volume={32},
  number={04},
  pages={2350030},
  year={2023},
  publisher={World Scientific}
}

@article{KNY3,
  title={Linear Upper Bounds on the Ribbonlength of Knots and Links},
  author={Kim, Hyoungjun and No, Sungjong and Yoo, Hyungkee},
  journal={Algebraic \& Geometric Topology},
  note    = {to appear},
  doi     = {10.1142/S0218216525500634},
  publisher={World Scientific}
}

@book{A,
  title={The knot book: an elementary introduction to the mathematical theory of knots},
  author={Adams, Colin},
  year={2004},
  publisher={American Mathematical Soc.}
}

@article{Cr,
  author  = {P. Cromwell},
  title   = {Embedding knots and links in an open book I: Basic properties},
  journal = {Topology and its Applications},
  volume  = {64},
  number  = {1},
  pages   = {37--58},
  year    = {1995}
}

@article{Dyn,
  author  = {I. Dynnikov},
  title   = {Three-page approach to knot theory. Encoding and local moves},
  journal = {Functional Analysis and Its Applications},
  volume  = {33},
  number  = {4},
  pages   = {260--269},
  year    = {1999}
}

@article{Dyn2,
  author  = {I. Dynnikov},
  title   = {Three-page approach to knot theory. The universal semigroup},
  journal = {Functional Analysis and Its Applications},
  volume  = {34},
  number  = {1},
  pages   = {29--40},
  year    = {2000}
}

@article{Dyn3,
  author  = {I. Dynnikov},
  title   = {A New Way to Represent Links. One-Dimensional Formalism and Untangling Technology},
  journal = {Acta Applicandae Mathematicae},
  volume  = {69},
  number  = {3},
  pages   = {243--283},
  year    = {2001}
}

@article{JLY,
  author  = {Jang, Useong and Lee, Minseo and Yoo, Hyungkee},
  title   = {Three-page indices of torus links},
  journal = {Journal of Knot Theory and Its Ramifications},
  volume  = {34},
  number  = {12},
  pages   = {2550063},
  year    = {2025}
}

@article{Kur,
  author  = {V. Kurlin},
  title   = {Three-page Dynnikov diagrams of linked 3-valent graphs},
  journal = {Functional Analysis and Its Applications},
  volume  = {35},
  number  = {3},
  pages   = {230--233},
  year    = {2001}
}

@article{Kur2,
  author  = {V. Kurlin},
  title   = {Three-page encoding and complexity theory for spatial graphs},
  journal = {Journal of Knot Theory and Its Ramifications},
  volume  = {16},
  number  = {1},
  pages   = {59--102},
  year    = {2006}
}

@article{KV,
  author  = {V. Kurlin and V. Vershinin},
  title   = {Three-Page Embeddings of Singular Knots},
  journal = {Functional Analysis and Its Applications},
  volume  = {38},
  number  = {1},
  pages   = {14--27},
  year    = {2004}
}

@article{Y,
  author  = {Yoo, Hyungkee},
  title   = {Three-page indices for rational links and pretzel links},
  journal = {Journal of Knot Theory and Its Ramifications},
  volume  = {35},
  number  = {1},
  pages   = {2550074},
  year    = {2026}
}

@article{K2,
  title   = {State models and the Jones polynomial},
  author  = {Kauffman, Louis H.},
  journal = {Topology},
  volume  = {26},
  number  = {3},
  pages   = {395--407},
  year    = {1987},
  doi     = {10.1016/0040-9383(87)90009-7}
}

@article{M,
  title   = {Jones polynomials and classical conjectures in knot theory},
  author  = {Murasugi, Kunio},
  journal = {Topology},
  volume  = {26},
  number  = {2},
  pages   = {187--194},
  year    = {1987},
  doi     = {10.1016/0040-9383(87)90058-9}
}

@article{Th,
  title   = {A spanning tree expansion of the Jones polynomial},
  author  = {Thistlethwaite, Morwen B.},
  journal = {Topology},
  volume  = {26},
  number  = {3},
  pages   = {297--309},
  year    = {1987},
  doi     = {10.1016/0040-9383(87)90003-6}
}

@article{Sch,
  title={The optimal paper Moebius band},
  author={Schwartz, Richard},
  journal={Annals of Mathematics},
  volume={201},
  number={1},
  pages={291--305},
  year={2025},
  publisher={Department of Mathematics, Princeton University Princeton, New Jersey, USA}
}

@article{CZDE,
  title={Ribbonlength upper bounds for small crossing knots and links},
  author={Chen, Zhicheng and Denne, Elizabeth and Patterson, Kyle and Patterson, Timi},
  journal={arXiv preprint arXiv:2510.16190},
  year={2025}
}

@article{DP,
  title={Bounded ribbonlength for knot families and multi-twist M\"{o}bius bands},
  author={Denne, Elizabeth and Patterson, Timi},
  journal={arXiv preprint arXiv:2509.18370},
  year={2025}
}
\bibliographystyle{abbrv}

\end{document}